\documentclass[11pt]{article}
\usepackage{amsmath,amssymb,amsthm}
\usepackage{fullpage}
\usepackage{dsfont}
\usepackage[round]{natbib}
\usepackage{hyperref}

\title{On Translation Invariant Kernels and Screw Functions}
\author{Purushottam Kar and Harish Karnick\\Department of Computer Science and Engineering\\Indian Institute of Technology Kanpur\\{\tt \{purushot,hk\}@cse.iitk.ac.in}}
\date{\today}

\newcommand{\R}{{\mathbb R}}
\newcommand{\N}{{\mathbb N}}
\newcommand{\Z}{{\mathbb Z}}
\renewcommand{\H}{\mathfrak H}

\newcommand{\br}[1]{\left({#1}\right)}
\newcommand{\bs}[1]{\left[{#1}\right]}
\newcommand{\abs}[1]{\left| {#1} \right|}
\newcommand{\norm}[1]{\left\| {#1} \right\|}

\newcommand{\ip}[2]{\left\langle{#1},{#2}\right\rangle}

\renewcommand{\vec}[1]{{\mathbf{#1}}}
\newcommand{\vecx}{\vec{x}}
\newcommand{\vecy}{\vec{y}}
\newcommand{\vect}{\vec{t}}
\newcommand{\vecxy}{\vecx,\vecy}

\newcommand{\veczero}{\vec{0}}
\newcommand{\vecone}{\vec{1}}

\renewcommand{\th}{^{\text{th}}}

\newcommand{\ind}{\mathds{1}}

\newtheorem{lem}{Lemma}
\newtheorem{thm}[lem]{Theorem}
\newtheorem{cor}[lem]{Corollary}

\begin{document}


\maketitle

\setcounter{footnote}{1}

\begin{abstract}
We explore the connection between Hilbertian metrics and positive definite kernels on the real line. In particular, we look at a well-known characterization of translation invariant Hilbertian metrics on the real line by \citet{helices}. Using this result we are able to give an alternate proof of Bochner's theorem for translation invariant positive definite kernels on the real line \citep{faog}.
\end{abstract}

\section{Introduction}
We start off with a few definitions and set the notation. In the following discussion, $\H$ shall denote the real Hilbert space, small case letters $x,y,\ldots$ shall denote real numbers, boldface small letters $\vecxy,\ldots$ shall denote objects in arbitrary domains. A metric $D$ defined on some domain $\Omega$ will be said to be \emph{Hilbertian} if there exists a map $\xi : \Omega \rightarrow \H$ such that for all $\vecxy \in \Omega$, $D(\vecxy) = \norm{\xi(\vecx) - \xi(\vecy)}_\H$. A symmetric real valued kernel $K : \Omega \times \Omega \rightarrow \R$ will be said to be \emph{positive definite} if for any $n \in \N$, any $\vecx_1,\ldots,\vecx_n \in \Omega$, if we consider the matrix $G_K = \bs{K(\vecx_i,\vecx_j)}$, then for any $c \in \R^n$, we have $c^\top G_K c \geq 0$. A symmetric real valued kernel $N : \Omega \times \Omega \rightarrow \R$ will be said to be \emph{negative definite} if whenever $c^\top \vecone = 0$, we have $c^\top G_N c \leq 0$ (here $\vecone \in \R^n$ is the vector of all ones). It is easy to verify that all squared Hilbertian metrics are negative definite.

There exists a close connection between negative definite kernels and positive definite kernels as given below :
\begin{thm}[\citep{bcr}, Chapter 3, Lemma 2.1]
\label{psd-nd}
For any given kernel $N$ over $\Omega$, and some fixed $\veczero \in \Omega$, define a new kernel $K$ as $K(\vecxy) = \frac{1}{2}\br{N(\vecx,\veczero) + N(\vecy,\veczero) - N(\vecxy) - N(\veczero,\veczero)}$. Alternatively if we have $N(\veczero,\veczero) \geq 0$ and define $K(\vecxy) = \frac{1}{2}\br{N(\vecx,\veczero) + N(\vecy,\veczero) - N(\vecxy)}$. Then $N$ is negative definite iff $K$ is positive definite.
\end{thm}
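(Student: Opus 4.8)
The plan is to reduce both implications to a single algebraic identity relating the quadratic form $c^\top G_K c$ in $K$ to a quadratic form in $N$, and then to exploit the fact that negative definiteness only constrains coefficient vectors that sum to zero. Throughout I write $\sigma = c^\top \vecone = \sum_i c_i$ for the coefficient sum attached to a configuration of points $\vecx_1,\ldots,\vecx_n$.

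For the forward direction ($N$ negative definite $\Rightarrow K$ positive definite), I would take arbitrary points and an arbitrary $c \in \R^n$, with \emph{no} zero-sum constraint. The key device is to augment the configuration with the reference point $\veczero$, assigning it the coefficient $c_0 = -\sigma$. The augmented vector $\tilde c = \br{c_0,c_1,\ldots,c_n}$ then satisfies $\tilde c^\top \vecone = 0$, so negative definiteness applies and yields $\tilde c^\top G_N \tilde c \leq 0$. Expanding $\tilde c^\top G_N \tilde c$ by peeling off the row and column indexed by $\veczero$ and using the symmetry of $N$, a direct computation should give exactly $\tilde c^\top G_N \tilde c = -2\,c^\top G_K c$ for the first definition of $K$; hence $c^\top G_K c = -\tfrac12\,\tilde c^\top G_N \tilde c \geq 0$.

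For the converse ($K$ positive definite $\Rightarrow N$ negative definite), I would invert the defining relation to write $N(\vecxy) = N(\vecx,\veczero) + N(\vecy,\veczero) - N(\veczero,\veczero) - 2K(\vecxy)$, and then restrict to $c$ with $\sigma = 0$. Under $\sigma = 0$ every term carrying a lone factor $\sum_i c_i$ collapses: the two $N(\cdot,\veczero)$ contributions and the $N(\veczero,\veczero)$ contribution all vanish, leaving $c^\top G_N c = -2\,c^\top G_K c \leq 0$. This is the same identity read backwards, so the two directions really share one calculation.

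Finally, the ``alternatively'' variant $K(\vecxy) = \tfrac12\br{N(\vecx,\veczero) + N(\vecy,\veczero) - N(\vecxy)}$ differs from the first form only by the additive constant $\tfrac12 N(\veczero,\veczero)$. I would dispatch it by noting that this constant kernel is itself positive definite precisely because $N(\veczero,\veczero)\geq 0$ (its quadratic form is $\tfrac12 N(\veczero,\veczero)\,\sigma^2$); adding a positive definite kernel preserves positive definiteness, which gives the forward direction, while in the converse the constant drops out under $\sigma = 0$ exactly as above, explaining why the extra hypothesis $N(\veczero,\veczero)\geq 0$ is needed only one way. I do not expect a genuine obstacle, since the manipulations are routine bookkeeping; the one step that demands care is fixing the augmentation coefficient $c_0 = -\sigma$, as that is exactly what turns an unconstrained $c$ into a zero-sum vector to which the definition of negative definiteness can be applied.
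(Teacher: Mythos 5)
Your proof is correct: the identity $\tilde c^\top G_N \tilde c = -2\,c^\top G_K c$, obtained by augmenting the configuration with $\vecx_0 = \veczero$ and $c_0 = -\sigma$, checks out by direct expansion, and it does deliver both implications from the single calculation as you claim, with the constant-kernel observation correctly handling the second variant (and correctly showing $N(\veczero,\veczero) \geq 0$ is needed only in the forward direction). The paper states this theorem without proof, citing \citet{bcr}; your argument is essentially the standard one from that reference (Chapter 3, Lemma 2.1), so there is no divergence to report.
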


Using the above expression for $K(\vecxy)$, it is simple to arrive at the following relation
\[
-\frac{1}{2}\br{N(\vecx,\vecx) + N(\vecy,\vecy) - 2N(\vecxy)} = K(\vecx,\vecx) + K(\vecy,\vecy) - 2K(\vecxy)
\]
In case $N$ is a squared Hilbertian metric satisfying identity of the indiscernibles (i.e. $N(\vecx,\vecx) = 0$ for all $x \in \Omega$), we can obtain the following relation $N(\vecxy) = K(\vecx,\vecx) + K(\vecy,\vecy) - 2K(\vecxy)$. This allows us to arrive at the following simple but useful corollary:

\begin{cor}
\label{psd-hilb}
Given a distance function $D$ over $\Omega$ satisfying identity of the indiscernibles and some fixed $\veczero \in \Omega$, define a kernel $K$ as $K(\vecxy) = \frac{1}{2}\br{D^2(\vecx,\veczero)  + D^2(\vecy,\veczero) - D^2(\vecxy)}$. Then $D$ is Hilbertian iff $K$ is positive definite.
\end{cor}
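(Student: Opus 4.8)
The plan is to apply Theorem~\ref{psd-nd} with $N = D^2$ and to exploit the algebraic identity recorded immediately before the statement. First I would observe that, because $D$ satisfies identity of the indiscernibles, $N(\vecx,\vecx) = D^2(\vecx,\vecx) = 0$ for every $\vecx \in \Omega$; in particular $N(\veczero,\veczero) = 0 \geq 0$, so the alternative form of Theorem~\ref{psd-nd} is applicable and the kernel $K$ defined in the corollary coincides exactly with the kernel that Theorem~\ref{psd-nd} produces from $N$. With this identification in place, the two directions of the iff split naturally.

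For the forward direction I would use the fact, noted in the introduction, that every squared Hilbertian metric is negative definite. Thus if $D$ is Hilbertian then $N = D^2$ is negative definite, and Theorem~\ref{psd-nd} immediately yields that $K$ is positive definite, with no further work required.

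The converse is where the actual content lies. Assuming $K$ is positive definite, I would invoke the standard factorization of a positive definite kernel (the reproducing-kernel/Moore--Aronszajn construction), namely the existence of a map $\phi : \Omega \rightarrow \H$ with $K(\vecxy) = \ip{\phi(\vecx)}{\phi(\vecy)}_\H$. Expanding the squared distance in feature space gives $\norm{\phi(\vecx) - \phi(\vecy)}_\H^2 = K(\vecx,\vecx) + K(\vecy,\vecy) - 2K(\vecxy)$, and by the relation $N(\vecxy) = K(\vecx,\vecx) + K(\vecy,\vecy) - 2K(\vecxy)$ derived just above the statement (valid here precisely because $N(\vecx,\vecx) = 0$), the right-hand side equals $N(\vecxy) = D^2(\vecxy)$. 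Hence $\norm{\phi(\vecx) - \phi(\vecy)}_\H = D(\vecxy)$, exhibiting $\phi$ as an embedding $\xi$ that witnesses $D$ being Hilbertian.

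I expect the converse to be the only genuine obstacle: it is the step that must guarantee and then use the feature map $\phi$, turning positive definiteness of $K$ back into an honest Hilbert-space isometry. Everything here hinges on identity of the indiscernibles, which forces $K(\vecx,\vecx) = D^2(\vecx,\veczero)$ and thereby makes the reconstructed feature-space distance coincide \emph{exactly} with $D^2$. Without that assumption the identity linking $N$ and $K$ would carry extra $N(\vecx,\vecx)$ and $N(\vecy,\vecy)$ diagonal corrections, and the embedding argument would no longer reproduce $D$ on the nose.
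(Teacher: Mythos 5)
Your proposal is correct and follows essentially the same route as the paper: necessity via Theorem~\ref{psd-nd} together with negative definiteness of squared Hilbertian metrics, and sufficiency by factoring the positive definite $K$ through a feature map $\varphi : \Omega \rightarrow \H$ and using identity of the indiscernibles to recover $\norm{\varphi(\vecx) - \varphi(\vecy)}_\H^2 = D^2(\vecxy)$. The only cosmetic difference is that you invoke the Moore--Aronszajn construction where the paper cites Mercer's theorem (equivalently the discussion in \citep{bcr}) for the same factorization step; if anything, Moore--Aronszajn is the safer citation here since it requires no compactness or continuity assumptions on $\Omega$.
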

\begin{proof}
To see that positive definiteness of $K$ is necessary, simply invoke Theorem~\ref{psd-nd} along with the fact that since $D$ is a metric, $D^2(\vecx,\vecx) \geq 0$ for all $x \in \Omega$ and that all squared Hilbertian metrics are negative definite. To see that is sufficient, simply invoke Mercer's theorem (\citep{mercer} or equivalently the discussion in \citep{bcr}, Chapter 3, Section 3) to confirm the existence of a mapping $\varphi : \Omega \rightarrow \H$ such that $K(\vecxy) = \ip{\varphi(\vecx)}{\varphi(\vecy)}_\H$. Thus we have $\norm{\varphi(\vecx) - \varphi(\vecy)}_\H^2 = K(\vecx,\vecx) + K(\vecy,\vecy) - 2K(\vecxy)$. Next, using the definition of $K$, we conclude that $K(\vecx,\vecx) + K(\vecy,\vecy) - 2K(\vecxy) = D^2(\vecxy) - \frac{1}{2}\br{D^2(\vecx,\vecx) + D^2(\vecy,\vecy)} = D^2(\vecxy)$ since $D$ satisfies identity of the indiscernibles. This gives us $\norm{\varphi(\vecx) - \varphi(\vecy)}_\H^2 = D^2(\vecxy)$ thus proving that $D$ is in fact a Hilbertian metric.
\end{proof}

The preceding corollary and its proof give us the well known correspondence between the set of positive definite kernels and Hilbertian metrics \citep{kernel-trick-distances}. Given any positive definite kernel $K$ it is possible to construct a Hilbertian metric $D_K$ and vice versa (using the polarization identity). Moreover, the preceding result holds for any arbitrary domain $\Omega$. However if translation invariance is considered on some locally compact Abelian (LCA) group, this symmetry breaks as is demonstrated by the following discussion.

\begin{lem}
Consider a domain $\Omega \subseteq G$ where $G$ is some LCA group. Then every translation invariant positive definite kernel $K$ on $G$ yields a translation invariant Hilbertian metric $D$.
\end{lem}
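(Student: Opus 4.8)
The plan is to carry out the standard feature-space construction in the forward direction and then observe that translation invariance of $K$ is inherited by the induced metric. First I would invoke Mercer's theorem (exactly as in the proof of Corollary~\ref{psd-hilb}) to obtain a feature map $\varphi : G \to \H$ with $K(\vecxy) = \ip{\varphi(\vecx)}{\varphi(\vecy)}_\H$ for all $\vecxy \in G$. Defining $D(\vecxy) = \norm{\varphi(\vecx) - \varphi(\vecy)}_\H$ then makes $D$ Hilbertian essentially by fiat, taking $\xi = \varphi$ in the definition; symmetry, non-negativity and the triangle inequality are all inherited from the Hilbert norm, so $D$ is at least a Hilbertian (pseudo)metric.

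The remaining and only substantive step is to check that $D$ is translation invariant. Expanding the squared norm gives $D^2(\vecxy) = K(\vecx,\vecx) + K(\vecy,\vecy) - 2K(\vecxy)$. Here I would use the hypothesis that $K$ is translation invariant on $G$, i.e. $K(\vecx+\vect,\vecy+\vect) = K(\vecxy)$ for every $\vect \in G$. Writing $\kappa(\vec{s}) := K(\vec{s},\veczero)$ and specializing the translation to $\vect = -\vecy$ yields $K(\vecxy) = \kappa(\vecx - \vecy)$; in particular $K(\vecx,\vecx) = \kappa(\veczero)$ is a constant that does not depend on $\vecx$. Substituting back gives $D^2(\vecxy) = 2\kappa(\veczero) - 2\kappa(\vecx - \vecy)$, so that $D^2$ depends on its two arguments only through the difference $\vecx - \vecy$.

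Translation invariance of $D$ is then immediate: for any $\vect \in G$ we have $(\vecx+\vect)-(\vecy+\vect) = \vecx - \vecy$, whence $D(\vecx+\vect,\vecy+\vect) = D(\vecxy)$. I do not anticipate a genuine obstacle in this argument; the only places that reward care are the correct interpretation of ``translation invariant kernel'' as a function of the difference $\vecx-\vecy$ (which relies on $K$ being defined on all of $G$), and the appeal to Mercer's theorem to guarantee the feature map $\varphi$. Indeed the whole force of the lemma lies in how easy this direction is: it sets up the contrast, flagged in the surrounding discussion, with the converse problem of manufacturing a \emph{translation invariant} positive definite kernel from an arbitrary translation invariant Hilbertian metric, where the analogous symmetry fails.
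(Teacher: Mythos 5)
Your proof is correct and takes essentially the same route as the paper: both realize the metric via the feature map for $K$, expand $D^2(\vecxy) = K(\vecx,\vecx) + K(\vecy,\vecy) - 2K(\vecxy)$, and use translation invariance of $K$ to conclude invariance of $D$. If anything, your version is slightly more careful --- the paper's displayed computation contains a sign/coefficient slip (writing $+K(\vecx+\vect,\vecy+\vect)$ where $-2K(\vecx+\vect,\vecy+\vect)$ is meant), which your reduction to $D^2(\vecxy) = 2\kappa(\veczero) - 2\kappa(\vecx-\vecy)$ avoids.
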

\begin{proof}
By translation invariance of $K$ we have for all $\vecx,\vecy,\vec{t} \in G$, $K(\vecx+\vec{t},\vecy+\vec{t}) = K(\vecxy)$. Thus, for the corresponding Hilbertian metric $D^2(\vecxy) = K(\vecx,\vecx) + K(\vecy,\vecy) - 2K(\vecxy)$, we have $D^2(\vecx+\vec{t},\vecy+\vec{t}) = 2K(\veczero,\veczero) + K(\vecx+\vec{t},\vecy+\vec{t}) = 2K(\veczero,\veczero) + K(\vecxy) = D^2(\vecxy)$.
\end{proof}

However the converse does not hold true: take for example the familiar inner product on some finite dimensional Euclidean space $\R^d$ as the positive definite kernel. Its corresponding Hilbertian metric is the usual Euclidean metric $\norm{\cdot}_2$. However, whereas the metric (i.e. $\norm{\cdot}_2$) is translation invariant, the kernel (i.e. the inner product) is clearly not. In the sequel we shall explore this asymmetry by taking the example of the LCA group $\R$.

\section{Screw Functions and Positive Definite Kernels}
\Citet{helices} initiated an investigation that resulted in a complete characterization of translation invariant Hilbertian metrics on the real line. Recall that a metric $D$ defined on some domain $\Omega$ is said to be \emph{Hilbertian} if there exists a map $\xi : \Omega \rightarrow \H$ such that for all $\vecxy \in \Omega$, $D(\vecxy) = \norm{\xi(\vecx) - \xi(\vecy)}_\H$. \citet{helices} referred to the maps $\xi$ that generated these metrics as \emph{screw functions}\footnote{\Citet{helices} also completely characterized transforms of Euclidean metrics on finite dimensional spaces that are embeddable in $\H$, i.e. functions of the kind $F : \R^+ \rightarrow \R^+$ such that $D(\vecxy) = F(\norm{\vecx-\vecy})$ is a Hilbertian metric, thus completing a sequence of results that had previously characterized Hilbertian metric transforms for $\H$. It is important to note that only for $\R$ do these provide a complete characterization of all translation invariant Hilbertian metrics.}. Subsequently there were generalizations of this work to the more general class of \emph{homogeneous Hilbertian metrics} \citep{spirals}. However we shall refrain from expanding our scope beyond translation invariant Hilbertian metrics on the real line. Below we restate the first main theorem of \citet{helices}:

\begin{thm}[\citep{helices}, Theorem 1]
\label{vNS}
All and only those translation invariant metrics on $\R$ that can be expressed as the following Stieltjes integral are Hilbertian:
\[
D^2(x,y) = D^2(x-y) = \int_0^\infty \frac{\sin^2 t(x-y)}{t^2}d\gamma(t)
\]
where $\gamma(t)$ is a non-decreasing function on $\R^+$  such that $\displaystyle \int_1^\infty t^{-2}d\gamma(t)$ exists.
\end{thm}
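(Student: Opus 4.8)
The plan is to prove the two directions separately, in both cases reducing every assertion about the metric $D$ to one about the single even function $\psi(x) := D^2(x)$ (with $\psi(0)=0$) via Corollary~\ref{psd-hilb}: by that corollary together with Theorem~\ref{psd-nd}, taking the negative definite kernel to be $N(x,y)=\psi(x-y)$, the metric $D$ is Hilbertian if and only if $\psi$ is negative definite in the sense used above. The integral in the statement is then read as a spectral resolution of such negative definite functions on $\R$, the kernel $\frac{\sin^2 tx}{t^2}$ playing the role of the elementary negative definite building block indexed by the frequency $t$.

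Sufficiency (every such $D$ is Hilbertian) is the elementary direction. Assuming the representation, I would verify that $\psi$ is negative definite directly and then invoke Corollary~\ref{psd-hilb}. Fix $x_1,\dots,x_n$ and $c\in\R^n$ with $c^\top\vecone=0$. Using $\sin^2\theta=\frac12\br{1-\cos2\theta}$ together with $\cos 2t(x_i-x_j)=\cos2tx_i\cos2tx_j+\sin2tx_i\sin2tx_j$, the inner sum collapses into a manifestly signed quantity,
\[
\sum_{i,j}c_ic_j\sin^2 t(x_i-x_j)=-\frac12\bs{\br{\sum_i c_i\cos2tx_i}^2+\br{\sum_i c_i\sin2tx_i}^2}\le 0,
\]
where the constant $\frac12$ has been annihilated by $\br{\sum_i c_i}^2=0$. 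Since $t^{-2}\ge0$ and $\gamma$ is non-decreasing, integrating this pointwise-nonpositive integrand against $d\gamma$ yields $\sum_{i,j}c_ic_j\psi(x_i-x_j)\le0$, so $\psi$ is negative definite; $\psi(0)=0$ and convergence of the integral (the integrand tends to $x^2$ near $t=0$ and is dominated by $t^{-2}$ for large $t$) are immediate from the hypotheses. Equivalently one may exhibit the screw function explicitly as $x\mapsto\frac{e^{itx}-1}{it}$, viewed in a complex $L^2$ space against a rescaling of $\gamma$, whose squared chord length is exactly the stated integral.

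Necessity (only such $D$ are Hilbertian) carries the real content. Given a screw function $\xi:\R\to\H$ generating $D$, I would normalise $\xi(0)=\veczero$, so $\norm{\xi(x)}^2=\psi(x)$. Translation invariance makes each map $\xi(x)\mapsto\xi(x+s)$ distance preserving, hence forces an affine relation $\xi(x+s)=U_s\xi(x)+\xi(s)$ with $\bc{U_s}$ a strongly continuous one-parameter group of orthogonal operators on the closed span of the curve; in other words $\xi$ is a one-cocycle for an orthogonal representation of $\R$. Diagonalising the group by Stone's theorem, $U_s=\int e^{ist}\,dE(t)$, the cocycle resolves into elementary modes: formally $\xi(x)=\int\frac{e^{itx}-1}{it}\,dE(t)\,v$ with velocity $v$, whence, using $\abs{e^{itx}-1}^2=2(1-\cos tx)=4\sin^2(tx/2)$,
\[
\norm{\xi(x)}^2=\int\frac{4\sin^2(tx/2)}{t^2}\,d\mu(t),\qquad \mu(\cdot)=\ip{E(\cdot)v}{v}.
\]
Folding $\pm t$ by evenness and substituting $t\mapsto2t$ turns this into $\int_0^\infty\frac{\sin^2 tx}{t^2}\,d\gamma(t)$ for a non-decreasing $\gamma$, and the finiteness of $\int_1^\infty t^{-2}\,d\gamma$ is precisely the condition making $\xi$ well defined.

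I expect this necessity direction to be the main obstacle, on two counts. The first is regularity: a general screw function need not be differentiable (the curve may be as rough as a Brownian path), so the velocity vector $v=\xi'(0)$ and the literal integration of the cocycle above are only heuristic; in the genuine argument one must instead resolve the cocycle directly into an orthogonal-increment (spectral) measure, and it is exactly the non-smooth curves that produce a $\gamma$ of infinite total mass subject only to $\int_1^\infty t^{-2}\,d\gamma<\infty$, thereby explaining the precise integrability hypothesis and the delicate behaviour near $t=0$, where $\frac{\sin^2 tx}{t^2}\to x^2$ and the linear (drift) part of the curve shows up as a boundary contribution. The second is that the spectral decomposition of the orthogonal group through Stone's theorem is the analytic engine of the whole result; since the paper subsequently derives Bochner's theorem, I would be careful to invoke Stone's theorem in its operator-theoretic form, established independently through the spectral theory of self-adjoint operators, so as to keep the overall development free of circularity.
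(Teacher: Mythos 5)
The paper itself offers no proof of Theorem~\ref{vNS}: it is imported verbatim from \citet{helices}, so your proposal can only be judged against the cited original and against completeness. Your sufficiency direction is correct and essentially complete: for $\sum_i c_i = 0$ the identity $\sum_{i,j}c_ic_j\sin^2 t(x_i-x_j) = -\frac{1}{2}\bs{\br{\sum_i c_i\cos 2tx_i}^2 + \br{\sum_i c_i\sin 2tx_i}^2} \leq 0$ checks out, integration against the non-negative measure $d\gamma$ preserves the sign, and Theorem~\ref{psd-nd} with Corollary~\ref{psd-hilb} then yields the embedding; your alternative explicit screw function $x \mapsto (e^{itx}-1)/(it)$ in $L^2$ of a rescaled $\gamma$ (with the real part of the complex inner product, and an extra coordinate $x\sqrt{c}$ if $\gamma$ has an atom at $0$, where the integrand must be read as $(x-y)^2$) is even more direct and is the standard construction.

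The necessity direction, however, contains a genuine gap --- one you partly flag but do not close. First, your invocation of a \emph{strongly continuous} group $\bc{U_s}$ requires continuity of $D$, which the theorem as restated here omits (the original concerns continuous screw lines); without it the claimed representation is false: embedding $\R$ as an uncountable orthonormal system gives the translation invariant Hilbertian metric $D(x,y) = \sqrt{2}\cdot\ind_{x \neq y}$, which cannot be of the stated form since dominated convergence (using $\sin^2(tu)/t^2 \leq \min(u^2, t^{-2})$ and the hypotheses on $\gamma$) forces any such integral to be continuous at $0$. Second, and more seriously, Stone's theorem does not by itself resolve the cocycle: the passage from $\xi(x+s) = U_s\xi(x) + \xi(s)$ to $\xi(x) = xv_0 + \int\br{e^{itx}-1}d\eta(t)$, with $v_0$ in the fixed space of $U$ and $\eta$ an orthogonally scattered measure satisfying $\int\min(1,t^2)\,d\norm{\eta}^2(t) < \infty$, is the entire analytic content of the theorem. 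You name the right idea (``resolve the cocycle directly into an orthogonal-increment measure'') but supply no construction; a complete argument must, for Borel sets $B$ bounded away from $0$, define $\eta(B) := \br{\int_B (e^{itx}-1)^{-1}dE(t)}\xi(x)$, verify via the cocycle identity that this is independent of $x$, recover the drift from $E(\{0\})\xi(x)$ through Cauchy's functional equation, and control $\norm{\eta}^2$ near $0$ and near $\infty$ to extract exactly the stated conditions on $\gamma$ (the drift producing the atom of $\gamma$ at $0$). Finally, your circularity concern is well placed and worth recording: the original derivation in \citet{helices} is Fourier-analytic, resting on Schoenberg's criterion (that $D$ is Hilbertian iff $e^{-\lambda D^2}$ is positive definite for all $\lambda > 0$) together with Bochner-type integral representations and a limit $\lambda \rightarrow 0^+$, so for the present paper's program of deriving Bochner's theorem \emph{from} Theorem~\ref{vNS}, an independent operator-theoretic proof of the kind you sketch is not optional but necessary --- which makes it all the more important that its central step be proved rather than assumed.
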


Our main observation is that since translation invariant kernels all correspond to translation invariant Hilbertian metrics, the above result can be used to arrive at a characterization of translation invariant kernels on the real line as well. This result is famously known as Bochner's theorem \citep{faog}.  As a historical note we recall that although today Bochner's theorem is taken to be the result for general locally compact Abelian groups, it was originally proved for the group of integers $\Z$ by \citet{herglotz} and for the group of reals $\R$ by \citet{bochner}. The extension to locally compact Abelian groups is actually due to \citet{weil}. Bochner was the first to recognize the key role this result plays in harmonic analysis.

However, note that it is not possible to work things the other way round, i.e. use Bochner's theorem as a starting point to arrive at the result of \citet{helices} since not all translation invariant metrics on the real line correspond to translation invariant kernels as discussed above. This in some sense shows that, at least for the case of the real line, the result of \citet{helices} is a more general one than Bochner's theorem and hence the two are not equivalent.

However, Bochner's theorem does not follow trivially from the above discussion and the proof progression requires overcoming a few technical hurdles. Most importantly, one is require to qualify Theorem~\ref{vNS} with other preconditions since the relation between translation invariant kernels and Hilbertian metrics is not that of an exact bijection.

In the subsequent discussion we will, for sake of convenience, assume that the kernel $K$ is positive and normalized i.e. $K(x,y) \in [0,1]$ for all $x,y$. This restriction will allow us to present the main ingredients of the proof without being bogged down by stray factors. We shall later relax this condition to let $K$ assume arbitrary real values (positive as well as negative) in Appendix~\ref{app:general-proof}. Our aim shall be to establish Bochner's theorem in the following form:
\begin{thm}
\label{bochner-me}
Every real valued translation invariant positive definite kernel on $\R$ is the Fourier Stieltjes integral of some bounded positive Borel measure on $\R$.
\end{thm}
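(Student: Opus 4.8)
The plan is to feed $K$ through the metric--kernel correspondence, invoke Theorem~\ref{vNS}, and then read off a Fourier--Stieltjes representation, using positivity of $K$ to tame the representing measure near the origin. First I would use the Lemma that every translation invariant positive definite kernel yields a translation invariant Hilbertian metric to pass from $K$ to its metric $D$. Writing $K(x,y)=K(x-y)$ and $K(x,x)=K(0)$ by translation invariance, the defining relation $D^2(x,y)=K(x,x)+K(y,y)-2K(x,y)$ collapses to $D^2(s)=2K(0)-2K(s)$ for $s=x-y$, equivalently $K(s)=K(0)-\tfrac12 D^2(s)$. Since $D$ is a translation invariant Hilbertian metric on $\R$, Theorem~\ref{vNS} supplies a non-decreasing $\gamma$ with $D^2(s)=\int_0^\infty \sin^2(ts)\,t^{-2}\,d\gamma(t)$, and hence $K(s)=K(0)-\tfrac12\int_0^\infty \sin^2(ts)\,t^{-2}\,d\gamma(t)$.

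The crux is to turn this into a genuine Fourier integral, and this is exactly where the normalization $K\in[0,1]$ must enter. Applying the half-angle identity $\sin^2\theta=\tfrac12(1-\cos 2\theta)$ gives $K(s)=K(0)-\tfrac14\int_0^\infty (1-\cos 2ts)\,t^{-2}\,d\gamma(t)$, so I would like to separate a constant term $\tfrac14\int_0^\infty t^{-2}\,d\gamma(t)$ from an oscillatory term $\tfrac14\int_0^\infty \cos(2ts)\,t^{-2}\,d\gamma(t)$. The obstacle is that $d\gamma(t)/t^2$ need not be integrable near $t=0$ for an arbitrary screw function --- this is precisely the asymmetry flagged in the introduction, whereby a translation invariant metric need not arise from a kernel. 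Here positivity rescues us: $K(s)\ge 0$ forces $D^2(s)\le 2K(0)$, i.e. $D^2$ is bounded. I would exploit this through a Cesàro average in $s$: for each fixed $t>0$, $\tfrac1S\int_0^S \sin^2(ts)\,ds\to\tfrac12$ as $S\to\infty$, so by Fubini--Tonelli and Fatou's lemma $\int_0^\infty \tfrac{1}{2t^2}\,d\gamma(t)\le \liminf_S \tfrac1S\int_0^S D^2(s)\,ds\le 2K(0)$. This single estimate simultaneously proves finiteness, $\int_0^\infty t^{-2}\,d\gamma(t)\le 4K(0)<\infty$, and the bound $\tfrac14\int_0^\infty t^{-2}\,d\gamma(t)\le K(0)$ needed below.

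With integrability secured I would split the integral and set $d\mu(t)=\tfrac14 t^{-2}\,d\gamma(t)$, a finite positive measure on $(0,\infty)$ with total mass $M\le K(0)$, obtaining $K(s)=(K(0)-M)+\int_0^\infty \cos(2ts)\,d\mu(t)$. The leading constant $c_0:=K(0)-M\ge 0$ by the bound above, so placing an atom of mass $c_0$ at the origin and pushing $\mu$ forward under $t\mapsto 2t$ before symmetrizing about $0$ yields a positive Borel measure $\nu$ on $\R$ with $\nu(\R)=K(0)\le 1$ and $K(s)=\int_\R e^{i\omega s}\,d\nu(\omega)$, the imaginary part vanishing since $\nu$ is symmetric and $K$ is real and even. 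This is the desired Fourier--Stieltjes representation by a bounded positive measure. I expect the main technical obstacle to be rigorously justifying the averaging/Fatou interchange and the handling of the $t\to 0$ singularity; once boundedness of $D^2$ is in hand, both the nonnegativity of $c_0$ and the finiteness of $\nu$ fall out of the very same estimate.
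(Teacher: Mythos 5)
Your proposal is correct, and its skeleton coincides with the paper's: pass from $K$ to the translation invariant Hilbertian metric $D^2(s)=2K(0)-2K(s)$, invoke Theorem~\ref{vNS}, use the half-angle identity to split off a constant, and symmetrize the cosine integral into a Fourier--Stieltjes integral. The genuine difference is in how you justify the splitting, i.e.\ the finiteness of $\int_0^\infty t^{-2}\,d\gamma(t)$: the paper's Lemma~\ref{int-bound} obtains $\int_0^\infty t^{-2}\,d\gamma(t)\le 4K(0)$ by citing the quantitative characterization of bounded screw functions in \citet{helices}, namely $\frac{1}{2}\int_0^\infty t^{-2}\,d\gamma(t)\le \limsup_{s\to\infty} D^2(s)\le \int_0^\infty t^{-2}\,d\gamma(t)$, whereas you prove the same bound from scratch by a Ces\`aro average: Tonelli gives $\frac{1}{S}\int_0^S D^2(s)\,ds=\int_0^\infty t^{-2}\bigl(\frac{1}{S}\int_0^S\sin^2(ts)\,ds\bigr)\,d\gamma(t)$, the inner average equals $\frac{1}{2}-\frac{\sin 2tS}{4tS}\to\frac{1}{2}$ for each fixed $t>0$, and Fatou converts the bound $D^2\le 2K(0)$ (which uses $K\ge 0$ --- exactly where the paper also invokes its normalization) into $\int_0^\infty t^{-2}\,d\gamma(t)\le 4K(0)$. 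What this buys is self-containedness: your argument leans only on Theorem~\ref{vNS} itself rather than on the separate boundedness theorem of \citet{helices}, at the modest cost of an averaging computation, and it delivers in one stroke both the finiteness needed to separate the integral and the nonnegativity of the constant $c_0$. Your endgame --- pushforward under $t\mapsto 2t$, even symmetrization, and an atom of full mass $c_0$ at the origin --- is a tidier packaging of the paper's change of variables plus its explicit interval measure extended via Carath\'eodory; note that your atom correctly carries mass $c_0$ (as the identity $K(0)=\nu(\R)$ requires), whereas the paper's displayed formula assigns the atom mass $C/2$ while halving only the nonatomic part. One caveat applies to both arguments equally: like the paper's main-text proof, yours covers the positive normalized case, and the fully general real-valued statement still requires the additional eigenvalue argument of Appendix~\ref{app:general-proof}.
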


\section{Proof of Theorem~\ref{bochner-me}}
Suppose we are given a translation invariant kernel $K$ on the real line. We begin by constructing a distance measure $D$ on the real line as follows (recall the relation used in the proof of Corollary~\ref{psd-hilb}):
\[
D^2(x-y) = K(x,x) + K(y,y) - 2K(x,y),
\]
where $x,y \in \R$. Since $K$ is assumed to be translation invariant, $D$ is translation invariant as well. Applying Theorem~\ref{vNS} on $D$, we arrive at the following equalities
\begin{eqnarray*}
2K(0) - 2K(x,y) &=& \int_0^\infty \frac{\sin^2 t(x-y)}{t^2}d\gamma(t)\\
                   &=& \int_0^\infty \frac{1 - \cos 2t(x-y)}{2t^2}d\gamma(t)\\
                   &=& \int_0^\infty \frac{d\gamma(t)}{2t^2} - \int_0^\infty \frac{\cos 2t(x-y)}{2t^2}d\gamma(t).
\end{eqnarray*}
We realize that we might have committed blasphemy by this segregation of the two parts of the integral since Theorem~\ref{vNS} does not assure us that $\displaystyle\int_0^\infty t^{-2}d\gamma(t)$ exists. It only assures us that $\displaystyle \int_1^\infty t^{-2}d\gamma(t)$ exists. To remedy this, we use the following simple lemma:
\begin{lem}
\label{int-bound}
$\displaystyle \int_0^\infty \frac{d\gamma(t)}{t^2} \leq 4K(0)$
\end{lem}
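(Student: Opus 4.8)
The plan is to exploit the normalization $K(x,y) \in [0,1]$, which makes the metric uniformly bounded: writing $s = x-y$ and recalling $D^2(s) = 2K(0) - 2K(x,y)$, the assumption $K(x,y) \ge 0$ gives $D^2(s) \le 2K(0)$ for every $s \in \R$. The quantity we wish to bound, $\int_0^\infty t^{-2}\,d\gamma(t)$, is exactly what the Stieltjes representation of $D^2$ would produce if the oscillating factor $\sin^2 t s$ were replaced by its mean value $\tfrac12$. This suggests averaging $D^2(s)$ over $s$ and pushing the average inside the integral, since the time-average of $\sin^2$ is precisely $\tfrac12$.

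Concretely, first I would introduce, for $T > 0$, the Ces\`aro average
\[
A_T := \frac{1}{2T}\int_{-T}^{T} D^2(s)\,ds,
\]
which satisfies $A_T \le 2K(0)$ uniformly in $T$ by the bound above. Substituting the representation from Theorem~\ref{vNS} and interchanging the order of integration (legitimate by Tonelli's theorem, as the integrand is non-negative) gives
\[
A_T = \int_0^\infty \frac{1}{t^2}\br{\frac{1}{2T}\int_{-T}^T \sin^2 t s\,ds}d\gamma(t) = \int_0^\infty \frac{g_T(t)}{t^2}\,d\gamma(t),
\]
where a direct computation of the inner integral yields $g_T(t) = \tfrac12 - \tfrac{\sin 2tT}{4tT}$. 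Two elementary facts about $g_T$ drive the argument: using $\abs{\sin u} \le \abs{u}$ one checks that $g_T(t) \ge 0$ for all $t > 0$, and for each fixed $t > 0$ one has $g_T(t) \to \tfrac12$ as $T \to \infty$.

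Finally I would let $T \to \infty$. Since the integrands $g_T(t)/t^2$ are non-negative and converge pointwise to $1/(2t^2)$, Fatou's lemma gives
\[
\frac12\int_0^\infty \frac{d\gamma(t)}{t^2} = \int_0^\infty \liminf_{T\to\infty}\frac{g_T(t)}{t^2}\,d\gamma(t) \le \liminf_{T\to\infty} A_T \le 2K(0),
\]
which rearranges into the claimed bound $\int_0^\infty t^{-2}\,d\gamma(t) \le 4K(0)$. The substantive content lies not in the arithmetic but in the two analytic interchanges: swapping the order of integration (handled by non-negativity and Tonelli) and passing to the limit in $T$ (handled by non-negativity and Fatou, which is exactly why I average over $s$ rather than attempt to evaluate a single value of $s$). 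I expect the main point requiring care to be the verification that $g_T \ge 0$, ensuring that $d\gamma$ acts throughout against a genuinely non-negative integrand so that Fatou's lemma applies cleanly.
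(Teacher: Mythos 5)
Your proof is correct, but it takes a genuinely different route from the paper's. The paper proves Lemma~\ref{int-bound} by quoting the characterization of \emph{bounded} screw functions due to \citet{helices}, namely the two-sided estimate $\frac{1}{2}\int_0^\infty t^{-2}\,d\gamma(t) \leq \limsup_{s \to \infty} D^2(s) \leq \int_0^\infty t^{-2}\,d\gamma(t)$, and combines the left half with $D^2(s) = 2K(0) - 2K(s) \leq 2K(0)$ (valid since $K \geq 0$) to get the factor $4$. You instead reprove the relevant half of that cited inequality from scratch: the Ces\`aro average $A_T \leq 2K(0)$, the Tonelli interchange, the computation $g_T(t) = \frac{1}{2} - \frac{\sin 2tT}{4tT}$ with $0 \leq g_T \leq 1$ via $\abs{\sin u} \leq \abs{u}$, and Fatou as $T \to \infty$ yield exactly $\frac{1}{2}\int_0^\infty t^{-2}\,d\gamma(t) \leq 2K(0)$; all of these steps check out (the application of Fatou over the continuous parameter $T$ is legitimate since $g_T(t) \to \frac{1}{2}$ is a genuine pointwise limit, so one may pass to any sequence $T_n \to \infty$ realizing $\liminf_T A_T$). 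The only point deserving an explicit sentence is the behavior at $t = 0$: your formula for $g_T$ is valid only for $t > 0$, while in the representation of Theorem~\ref{vNS} the integrand at $t = 0$ is $s^2$ by continuity, so an atom of $\gamma$ at the origin would make $A_T$ grow like $T^2/3$; your uniform bound $A_T \leq 2K(0)$ therefore also rules out such an atom, which is consistent with (and replaces the appeal to) the fact quoted in the paper that $\lim_{t \to 0^+}\gamma(t) = \gamma(0)$ for bounded metrics. In exchange for being slightly longer, your argument is self-contained, requiring only Tonelli and Fatou rather than the external bounded-screw-function lemma; the paper's citation-based proof is shorter and carries along the sharper two-sided comparability between $\limsup_{s\to\infty} D^2(s)$ and $\int_0^\infty t^{-2}\,d\gamma(t)$, which your one-sided averaging bound does not reproduce (nor does the lemma require it).
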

\begin{proof}
\citet{helices} characterize bounded Hilbertian metrics by proving that for any such metric $D$, $\underset{t \rightarrow 0^+}\lim \gamma(t) = \gamma(0)$ i.e. $\gamma(t)$ does not have a discrete component at $0$ and that $\displaystyle \underset{t \rightarrow \infty}{\lim\sup}\ D^2(t) = \Theta\br{\int_{0}^\infty t^{-2}d\gamma(t)}$. More specifically, they prove
\[
\frac{1}{2}\int_{0}^\infty t^{-2}d\gamma(t) \leq \underset{t \rightarrow \infty}{\lim\sup}\ D^2(t) \leq \int_{0}^\infty t^{-2}d\gamma(t).
\]
In our case, we have $K(t) \in [0,1]$ which gives us
\[ 
D^2(t) = 2K(0) - 2K(t) \leq 2K(0).
\]
This proves that our Hilbertian metric is bounded which, upon applying the result of \citet{helices}, proves the claim.
\end{proof}
This justifies the segregation of the integral into two parts. Rearranging terms we get
\begin{eqnarray*}
K(x,y) &=& K(0) - \int_0^\infty \frac{d\gamma(t)}{4t^2} + \int_0^\infty \frac{\cos 2t(x-y)}{4t^2}d\gamma(t)\\
	   &=& C + \int_0^\infty \frac{\cos 2t(x-y)}{4t^2}d\gamma(t),
\end{eqnarray*}
where $\displaystyle C = K(0) - \int_0^\infty \frac{d\gamma(t)}{4t^2}$. Note that using Lemma~\ref{int-bound}, we have $C \geq 0$. Using a change of variables and a rescaling of the function $\gamma(\cdot)$ (which leaves all properties of $\gamma(\cdot)$ from Theorem~\ref{vNS} unharmed) gives us the following expression:
\[
K(x,y) = C + \int_0^\infty \frac{\cos t(x-y)}{t^2}d\gamma(t)
\]
%
for some constant $C \geq 0$.

Now we transform this Stieltjes integral into a Fourier Stieltjes integral to arrive at the canonical form of Bochner's theorem (for example as given in \citep{faog}). For any $0 \leq x < \infty$, define $\displaystyle\alpha(x) := \int_{0}^xt^{-2}d\gamma(t)$ which gives us
\[
K(x,y) = C + \int_0^\infty \cos t(x-y)d\alpha(t).
\]
Note that $\alpha\br{\cdot}$ is a non-decreasing right continuous function. Now define, for any interval $S = (a,b] \subset \R$, the measure $\mu$ as follows
\[
\mu(S) := \frac{\alpha(b) - \br{1 - 2\cdot\ind_{a < 0}}\alpha(a)}{2} + \frac{C}{2}\cdot\ind_{0 \in (a,b]}.
\]
An application of Carath\'eodory's extension theorem allows us to extend $\mu$ to a Borel measure over $\R$. Since $\alpha\br{\cdot}$ is a non decreasing function, $\displaystyle\int_0^\infty t^{-2}d\gamma(t) < \infty$ and $C > 0$, $\mu$ is a bounded positive measure and satisfies
\[
K(x,y) = \int_{-\infty}^\infty e^{it(x-y)}d\mu(t).
\]
This establishes Bochner's theorem on the real line as claimed.

\section{Extensions}
We note that the above characterization can also be extended to separable kernels on finite dimensional Euclidean spaces i.e. kernels which can be written as $K(\vecxy) = \prod\limits_{i=1}^dK_i(\vecx_i,\vecy_i)$ where $K_i(\vecx_i,\vecy_i)$ is a translation invariant kernel on the $i\th$ dimension. The resulting characterization is given as the following theorem:
\begin{thm}
Every separable real valued translation invariant positive definite kernel on $\R^d$ is the Fourier Stieltjes integral of some bounded positive Borel measure on $\R^d$.
\end{thm}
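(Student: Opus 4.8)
The plan is to lift the one-dimensional result, Theorem~\ref{bochner-me}, to each coordinate separately and then reassemble the factors through a product measure. Since $K$ is separable we may write $K(\vecxy) = \prod_{i=1}^d K_i(\vecx_i,\vecy_i)$ with each $K_i$ translation invariant on $\R$; the first thing to establish is that every factor $K_i$ is itself positive definite on $\R$, so that Theorem~\ref{bochner-me} applies to it. This can be extracted from positive definiteness of $K$ by restricting attention to configurations $\vecx_1,\ldots,\vecx_n \in \R^d$ that agree in every coordinate except the $i\th$: along such a restriction translation invariance gives $K_j(\vecx_j,\vecx_j)=K_j(0)$ for $j\neq i$, so the Gram matrix of $K$ equals $\br{\prod_{j\neq i}K_j(0)}G_{K_i}$; under the running positivity normalization the scalar prefactor is positive (a vanishing factor would force $K\equiv0$), whence $G_{K_i}\succeq0$.

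Having secured this, I would apply Theorem~\ref{bochner-me} once per coordinate to obtain bounded positive Borel measures $\mu_1,\ldots,\mu_d$ on $\R$ with
\[
K_i(\vecx_i,\vecy_i)=\int_{-\infty}^\infty e^{it_i(\vecx_i-\vecy_i)}\,d\mu_i(t_i),\qquad i=1,\ldots,d,
\]
and then form the product measure $\mu:=\mu_1\otimes\cdots\otimes\mu_d$ on $\R^d$, whose existence on the product Borel $\sigma$-algebra follows from the standard product-measure construction. As a finite product of positive measures $\mu$ is positive, and it is bounded since $\mu(\R^d)=\prod_{i=1}^d\mu_i(\R)<\infty$.

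Finally I would substitute the per-coordinate representations into the separable form and collapse the product of one-dimensional integrals into a single integral over $\R^d$:
\[
K(\vecxy)=\prod_{i=1}^d\int_{-\infty}^\infty e^{it_i(\vecx_i-\vecy_i)}\,d\mu_i(t_i)=\int_{\R^d}e^{i\ip{\vect}{\vecx-\vecy}}\,d\mu(\vect),
\]
which is precisely the claimed Fourier--Stieltjes representation. The interchange in the last equality is an application of Fubini's theorem and is unproblematic, since each $\mu_i$ is finite and the integrand $\prod_i e^{it_i(\vecx_i-\vecy_i)}$ has modulus one, so the iterated integral converges absolutely. I expect the only genuinely delicate step to be the descent from positive definiteness of the product kernel to that of the individual factors $K_i$---products of indefinite kernels can occasionally be definite---which the one-coordinate restriction above is designed to handle; everything downstream is a routine product-measure and Fubini computation.
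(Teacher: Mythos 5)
Your proof is correct and follows the same route as the paper: apply Theorem~\ref{bochner-me} once per coordinate, form the product measure $\mu = \mu_1 \times \cdots \times \mu_d$, and collapse the product of one-dimensional Fourier--Stieltjes integrals into a single integral over $\R^d$ via Fubini, which is legitimate exactly as you say because each $\mu_i$ is finite and the integrand has modulus one. The one genuine difference is your preliminary step: the paper applies Theorem~\ref{bochner-me} to each factor $K_i$ without comment, tacitly assuming each factor is itself positive definite, whereas you derive this from positive definiteness of $K$ by restricting to configurations varying only in the $i\th$ coordinate, so that the Gram matrix of $K$ becomes $\br{\prod_{j\neq i}K_j(0)}G_{K_i}$ --- this fills a real gap in the paper's argument, since the theorem statement only assumes the product is positive definite. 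One residual caveat: your positivity of the scalar prefactor leans on the running normalization $K \in [0,1]$, but the theorem as stated allows arbitrary real values, and in that generality an even number of factors could have $K_j(0) < 0$ while $\prod_j K_j(0) = K(\vecx,\vecx) \geq 0$; this is repaired by flipping the signs of such factors in pairs, which leaves the product $K$ unchanged and reduces to the case $K_j(0) > 0$ for all $j$ (the degenerate case $K_j(0) = 0$ forces $K \equiv 0$ via the $2\times 2$ minors, as you note, and is handled by $\mu = 0$).
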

\begin{proof}
We have $K(\vecxy) = \prod\limits_{i=1}^dK_i(\vecx_i,\vecy_i)$ where every $K_i$ is a translation invariant kernel. Using Theorem~\ref{bochner-me}, there exist some positive bounded Borel measures $\mu_1,\ldots,\mu_n$ on $\R$ such that for all $i = 1, \ldots, d$, we have
\[
K_i(\vecx_i,\vecy_i) = \int_{-\infty}^\infty e^{it(\vecx_i-\vecy_i)}d\mu_i(t).
\]
Thus we have
\begin{eqnarray*}
K(\vecxy) &=& \prod_{i=1}^dK_i(\vecx_i,\vecy_i) = \prod_{i=1}^d\int_{-\infty}^\infty e^{it_i(\vecx_i-\vecy_i)}d\mu_i(t_i)\\
		  &=& \underbrace{\int_{-\infty}^\infty\ldots\int_{-\infty}^\infty}_d\br{\prod_{i=1}^de^{it_i(\vecx_i-\vecy_i)}}d\br{\prod_{i=1}^d\mu_i(t_i)} = \int_{\R^d}e^{i\ip{\vect}{\vecx-\vecy}}d\mu(\vect),
\end{eqnarray*}
where $\mu = \mu_1 \times \ldots \times \mu_d$ is a bounded positive product measure and we have used Fubini's theorem in the fourth step.
\end{proof}
This characterization applies to some of the most widely used translation kernels \citep{transinvembed} given below
\begin{enumerate}
	\item Gaussian kernel $\displaystyle K(\vecxy) = e^{-\frac{\norm{\vecx-\vecy}_2^2}{2}} = \prod_{i=1}^de^{-\frac{\br{\vecx_i-\vecy_i}^2}{2}}$.
	\item Laplacian kernel $\displaystyle K(\vecxy) = e^{-\norm{\vecx-\vecy}_1} = \prod_{i=1}^de^{-\abs{\vecx_i-\vecy_i}}$.
	\item Cauchy kernel $\displaystyle K(\vecxy) = \prod_{i=1}^d\frac{2}{1 + \br{\vecx_i-\vecy_i}^2}$.
\end{enumerate}
%

\appendix
\section{Generalizing the proof for arbitrary kernels}
\label{app:general-proof}
We now assume no normalizations on $K$, i.e. we allow for all $x,y$, $K(x,y) \in \R$. What this lack of normalization (more specifically, the ability of the kernel to take negative values) hinders is the ability to use $K(x,y) \geq 0$ to show that $\displaystyle C = K(0) -\int_0^\infty \frac{d\gamma(t)}{4t^2} \geq 0$ in the proof. Thus we are left, at the end of the day, with the following characterization:
\[
K(x,y) = \int_{-\infty}^\infty e^{it(x-y)}d\mu(t) + C,
\]
where $\mu$ is a bounded positive measure and $C$ is some (possibly negative) real constant. We first proceed by absorbing this constant into the measure $\mu$ by defining a new measure $\tilde\mu$. For any set $S = (a,b] \subset \R$, define
\[
\tilde\mu(S) := \mu(S) + C\cdot\ind_{0 \in (a,b]}.
\]
which gives us
\[
K(x,y) = \int_{-\infty}^\infty e^{it(x-y)}d\tilde\mu(t).
\]
Notice that this might have caused $\mu$ to acquire a discrete component at $0$ (notice that due to the property of $\gamma\br{\cdot}$ that $\underset{t \rightarrow 0^+}\lim \gamma(t) = \gamma(0)$, $\mu$ does not have a discrete component at $0$ to begin with). What we will show that unless this discrete component is non-negative, $K$ cannot be a positive definite kernel which will establish Bochner's theorem. 

This we shall show by proving that in case $C \neq 0$ (if $C = 0$ then we are done), then $\tilde\mu(0)$ is actually an eigenvalue of $K$. This will imply that for $K$ to be positive semi-definite, $\tilde\mu(0) \geq 0$ which shall complete our proof.

\begin{lem}
In case the modified distribution $\tilde\mu$ has a discrete component $\tilde\mu(0)$ at $0$ then $\tilde\mu(0)$ is an eigenvalue of $K$.
\end{lem}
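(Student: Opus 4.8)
The plan is to exhibit the eigenvalue $\tilde\mu(0)$ together with its eigenfunction, rather than merely to bound a quadratic form. Writing the kernel in its translation-invariant profile $k(u) := \int_{-\infty}^\infty e^{itu}\,d\tilde\mu(t) = C + \int_{-\infty}^\infty e^{itu}\,d\mu(t)$ (here $C = \tilde\mu(0)$ and $\mu$ carries no atom at $0$), the governing intuition is that a translation invariant operator is diagonalised by the characters $x \mapsto e^{i\omega x}$, and the spectral weight that $\tilde\mu$ places on the character $\omega = 0$ --- namely the atom $C$ --- should surface as the eigenvalue attached to the constant function $\mathbf{1}$. Since $\mathbf{1} \notin L^2(\R)$ and the raw convolution operator is unbounded, I would work with the normalised averaging operators $A_R f(x) := \frac{1}{2R}\int_{-R}^R K(x,y)f(y)\,dy$ on $L^2([-R,R])$, each of which is self-adjoint and inherits positive semidefiniteness from $K$, and show that $\tilde\mu(0)$ is an eigenvalue of $K$ in the precise sense that the normalised constant $e_R := (2R)^{-1/2}\ind_{[-R,R]}$ is a \emph{genuine approximate eigenvector}: $\norm{A_R e_R - \tilde\mu(0)\,e_R}_{L^2} \to 0$ as $R \to \infty$.

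First I would carry out the elementary reduction $A_R e_R = g_R\cdot e_R$ on $[-R,R]$, where the scalar field $g_R(x) = \frac{1}{2R}\int_{x-R}^{x+R}k(u)\,du$ arises from the substitution $u = x-y$. Feeding in the two-part form of $k$ and integrating the character explicitly gives the clean identity $g_R(x) - \tilde\mu(0) = \int_{-\infty}^\infty e^{itx}\,\frac{\sin tR}{tR}\,d\mu(t)$, so that the deviation of $g_R$ from the claimed eigenvalue is governed entirely by the atomless part $\mu$ damped by the Dirichlet-type factor $\frac{\sin tR}{tR}$.

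The crux --- and the step I expect to be the main obstacle --- is to show that the residual actually vanishes in norm, which is exactly what separates the eigenvector equation from a mere Rayleigh-quotient estimate. A naive pointwise bound on $g_R(x) - \tilde\mu(0)$ is fatally lossy near the endpoints $x \approx \pm R$, where the one-sided window causes an $\O{1}$ deviation over a region of non-negligible measure; the cancellation must instead be extracted globally. Expanding the square and performing the $x$-integration first yields the exact expression
\[
\norm{A_R e_R - \tilde\mu(0)\,e_R}_{L^2}^2 = \int_{-\infty}^\infty\!\!\int_{-\infty}^\infty \frac{\sin tR}{tR}\,\frac{\sin sR}{sR}\,\frac{\sin((t-s)R)}{(t-s)R}\,d\mu(t)\,d\mu(s).
\]
Each factor is bounded by $1$ and $\mu\times\mu$ is finite, so dominated convergence applies; the integrand tends pointwise to $\ind_{t=0}\,\ind_{s=0}$, and the limit integral is $\mu(\{0\})^2 = 0$ precisely because $\mu$ carries no atom at the origin --- the property $\lim_{t\to 0^+}\gamma(t) = \gamma(0)$ recorded in Lemma~\ref{int-bound}. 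This is where the hypothesis genuinely bites.

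Finally I would assemble the pieces: $A_R e_R = \tilde\mu(0)\,e_R + r_R$ with $\norm{r_R}_{L^2}\to 0$ and $\norm{e_R}_{L^2} = 1$, so $\tilde\mu(0)$ lies in the spectrum of the normalised kernel operator, witnessed by the constant mode as eigenfunction in the limit. This establishes $\tilde\mu(0)$ as an eigenvalue of $K$ as claimed, and --- being a true approximate eigenvector identity rather than a one-directional quadratic-form bound --- it is exactly the statement the subsequent argument needs in order to force $\tilde\mu(0)\ge 0$ whenever $K$ is positive semidefinite.
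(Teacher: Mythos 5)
Your proof is correct, but it takes a genuinely different route from the paper's --- and a more rigorous one. The paper restricts the kernel to $[-1,1]$ so that the constant function lies in $L^2$, applies the integral operator $T_K$ to it, and evaluates $\int_{-\infty}^{\infty} e^{-ity}\,dy$ as a Dirac delta to conclude $T_Kf = \tilde\mu(0)f$ exactly; this is a one-line formal diagonalization, but it is heuristic (the $y$-integral runs over all of $\R$ even though the domain was just declared compact, the identity $\int e^{-ity}\,dy=\delta(t)$ drops a factor of $2\pi$, and pairing $\delta(t)$ against $d\tilde\mu$ --- a measure which itself has an atom at $0$ --- is not a well-defined operation). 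You instead prove a rigorous approximate-eigenvector statement on expanding windows: your reduction $A_Re_R = g_Re_R$ with $g_R(x)-\tilde\mu(0)=\int e^{itx}\frac{\sin tR}{tR}\,d\mu(t)$ is correct, the triple-product identity for $\norm{A_Re_R-\tilde\mu(0)e_R}^2$ follows by Fubini (legitimate, since all factors are bounded and $\mu$ is finite), and dominated convergence kills the residual precisely because $\mu(\{0\})=0$ --- so you are the one who actually \emph{uses} the atomlessness property $\lim_{t\to0^+}\gamma(t)=\gamma(0)$, which the paper records but never invokes in its computation. Your version also delivers exactly what the appendix needs, since $0\le\ip{A_Re_R}{e_R}=\tilde\mu(0)+\int\br{\frac{\sin tR}{tR}}^2d\mu(t)\to\tilde\mu(0)$ forces $\tilde\mu(0)\ge0$ for positive definite $K$. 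Two remarks. First, your worry about endpoint losses is unfounded, and the triple-sinc computation, though correct, is an overcomplication: since $\abs{e^{itx}}=1$, one has $\sup_x\abs{g_R(x)-\tilde\mu(0)}\le\int\abs{\frac{\sin tR}{tR}}\,d\mu(t)\to\mu(\{0\})=0$ by the same dominated-convergence argument, so the residual already vanishes uniformly in $x$ and the ``naive pointwise bound'' in fact suffices. Second, strictly speaking you exhibit $\tilde\mu(0)$ as a limit of approximate eigenvalues of the varying family $A_R$ rather than as an eigenvalue of a single fixed operator; but since the paper's own phrase ``eigenvalue of $K$'' is not attached to a well-defined operator either (the constant function is not genuinely an eigenfunction of the operator the paper writes down), your formulation is the defensible rigorous reading of the lemma, and it is strong enough for everything the statement is used for.
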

\begin{proof}
For simplicity (and also without loss of generalization), assume that the domain of $K$ is the unit interval $[-1,1]$ so that the kernel is a function $K: [-1,1]\times[-1,1]\rightarrow\R$ (note that the kernel is still being allowed to take arbitrary real values). The purpose of proposing a compact domain is to allow us to work with constant functions $f(x) = c$ and still have $f(x) \in \ell^2$ with respect to the Lebesgue measure. Consider the integral operator $T_K$ on $\ell^2$ corresponding to the kernel $K$ defined as follows:
\[
T_K(f)(x) = \int_{- \infty}^\infty f(y)K(x,y)dy.
\]
We will show below that the constant function is an eigenfunction of this integral operator. Taking $f(x) = c$ we get
\begin{eqnarray*}
T_K(f)(x) &=& \int_{- \infty}^\infty f(y)K(x,y)dy = \int_{- \infty}^\infty c\int_{-\infty}^\infty e^{it(x-y)}d\tilde\mu(t)dy\\
		  &=& \int_{- \infty}^\infty ce^{itx}\br{\int_{-\infty}^\infty e^{-ity}dy}d\tilde\mu(t) = \int_{- \infty}^\infty ce^{itx}\delta(t)d\tilde\mu(t)\\
		  &=& c\tilde\mu(0) = \tilde\mu(0)f(x)
\end{eqnarray*}
where $\delta(x)$ is the Dirac delta function and we have used Fubini's theorem in the third step. This proves the claim.
\end{proof}

\bibliographystyle{plainnat}
\bibliography{ref}

\begin{thebibliography}{10}
\providecommand{\natexlab}[1]{#1}
\providecommand{\url}[1]{\texttt{#1}}
\expandafter\ifx\csname urlstyle\endcsname\relax
  \providecommand{\doi}[1]{doi: #1}\else
  \providecommand{\doi}{doi: \begingroup \urlstyle{rm}\Url}\fi

\bibitem[Berg et~al.(1984)Berg, Chistensen, and Russel]{bcr}
Christian Berg, Jens Peter~Reus Chistensen, and Paul Russel.
\newblock \emph{{Harmonic Analysis on Semigroups}}.
\newblock Springer-Verlag, 1984.

\bibitem[Bochner(1933)]{bochner}
Salomon Bochner.
\newblock {Monotone Funktionen, Stieltjessche Integrale, und harmoniche
  Analyse}.
\newblock \emph{Mathematische Annalen}, 108:\penalty0 378--410, 1933.

\bibitem[Fuglede(2005)]{spirals}
Bent Fuglede.
\newblock {Spirals in Hilbert Space : With an application in information
  theory}.
\newblock \emph{Expositiones Mathematicae}, 23:\penalty0 23--45, 2005.

\bibitem[Herglotz(1911)]{herglotz}
Gustav Herglotz.
\newblock {\"Uber Potenzreihen mit positivem reellen Teil im Einheitskreis}.
\newblock \emph{Leipziger Berichte}, 63:\penalty0 501--511, 1911.

\bibitem[Mercer(1909)]{mercer}
James Mercer.
\newblock {Functions of Positive and Negative Type, and their Connection with
  the Theory of Integral Equations}.
\newblock \emph{Philosophical Transactions of the Royal Society A},
  209:\penalty0 415--446, 1909.

\bibitem[Rahimi and Recht(2007)]{transinvembed}
Ali Rahimi and Benjamin Recht.
\newblock {Random Features for Large-Scale Kernel Machines}.
\newblock In \emph{Neural Information Processing Systems}, 2007.

\bibitem[Rudin(1962)]{faog}
Walter Rudin.
\newblock \emph{{Fourier Analysis on Groups}}.
\newblock lnterscience Publishers, New. York, 1962.

\bibitem[Sch\"olkopf(2000)]{kernel-trick-distances}
Bernhard Sch\"olkopf.
\newblock {The Kernel Trick for Distances}.
\newblock In \emph{Advances in Neural Information Processing Systems}, pages
  301--307, 2000.

\bibitem[von Neumann and Schoenberg(1941)]{helices}
John von Neumann and Isaac~Jacob Schoenberg.
\newblock {Fourier Integrals and Metric Geometry} .
\newblock \emph{Transactions of the American Mathematical Society},
  50:\penalty0 226--251, 1941.

\bibitem[Weil(1938)]{weil}
Andr\'e Weil.
\newblock \emph{{L'int\'egration dans les groupes topologiques et ses
  applicaitons}}.
\newblock Gauthiers-Villars, Paris, 1938.

\end{thebibliography}

\end{document}